\providecommand{\U}[1]{\protect\rule{.1in}{.1in}}
\providecommand{\U}[1]{\protect \rule{.1in}{.1in}}
\newtheorem{theorem}{Theorem}[section]
\newtheorem{lemma}[theorem]{Lemma}
\newenvironment{proof}[1][Proof]{\noindent \textbf{#1.} }{\  \rule{0.5em}{0.5em}}
\numberwithin{equation}{section}
\begin{document}

\title{On the least almost--prime in arithmetic progression }

\author{Jinjiang Li\footnotemark[1] \,\,\,\,\,  \& \,\,\, Min Zhang\footnotemark[2]\,\,\,\,\,\,\,  \& \,\,\,
        Yingchun Cai\footnotemark[3]
                    \vspace*{-4mm} \\
     $\textrm{\small Department of Mathematics, China University of Mining and Technology\footnotemark[1]}$
                    \vspace*{-4mm} \\
     \small  Beijing 100083, P. R. China
                     \vspace*{-4mm}  \\
     $\textrm{\small School of Applied Science, Beijing Information Science and Technology University\footnotemark[2]}$
                    \vspace*{-4mm}  \\
     \small  Beijing 100192, P. R. China
                    \vspace*{-4mm}  \\
     $\textrm{\small School of Mathematical Science, Tongji University\footnotemark[3]}$
                    \vspace*{-4mm}  \\
     \small  Shanghai 200092, P. R. China  }

\footnotetext[2]{Corresponding author. \\
    \quad\,\, \textit{ E-mail addresses}:
     \href{mailto:jinjiang.li.math@gmail.com}{jinjiang.li.math@gmail.com} (J. Li),
     \href{mailto:min.zhang.math@gmail.com}{min.zhang.math@gmail.com} (M. Zhang),\\
     \qquad \qquad\qquad\quad\quad\quad \,\,\,\,\,
     \href{mailto:yingchuncai@tongji.edu.cn}{yingchuncai@tongji.edu.cn} (Y. Cai).  }

\date{}
\maketitle

{\textbf{Abstract}}: Let $\mathcal{P}_r$ denote an almost--prime with at most $r$ prime factors, counted
according to multiplicity. Suppose that $a$ and $q$ are positive integers satisfying $(a,q)=1$. Denote by $\mathcal{P}_2(a,q)$ the least almost--prime $\mathcal{P}_2$ which satisfies $\mathcal{P}_2\equiv a\pmod q$. In this paper, it is proved that for sufficiently large $q$, there holds
\begin{equation*}
   \mathcal{P}_2(a,q)\ll q^{1.8345}.
\end{equation*}
This result constitutes an improvement upon that of Iwaniec \cite{Iwaniec-1982}, who obtained the same conclusion, but for the range $1.845$ in place of $1.8345$.

{\textbf{Keywords}}: Almost--prime; arithmetic progression; linear sieve; Selberg's $\Lambda^2$--sieve

{\textbf{MR(2020) Subject Classification}}: 11N13, 11N35, 11N36

\section{Introduction and main result}
Let $\mathcal{P}_r$ denote an almost--prime with at most $r$ prime factors, counted according to multiplicity.
In this paper, we shall investigate the occurrence of almost--primes in arithmetic progressions. This problem correspond to a well--known conjecture concerning prime numbers. The conjecture states that, if $(a,q)=1$,
there exists a prime $p$ satisfying
\begin{equation}\label{assumption}
   p\equiv a\!\!\!\pmod q,\quad  p\leqslant q^2\quad (q\geqslant2).
\end{equation}
Indeed the bound for $p$ may presumably be reduced to $p\ll q(\log q)^2$. Unfortunately, we cannot prove
(\ref{assumption}) even on the assumption of the generalized Riemann hypothesis. The nearest approach seems
to be the conditional estimate $p\ll (\varphi(q))^2(\log q)^4$, which follows from Theorem 6 of Titchmarsh \cite{Titchmarsh-1930}. However, as an approach to approximate this conjecture, we can consider almost--primes
in arithmetic progression. Many authors investigated this approximation in the past time. Denote by $\mathcal{P}_2(a,q)$ the least almost--prime $\mathcal{P}_2$ which satisfies $\mathcal{P}_2\equiv a\pmod q$. In 1965, Levin \cite{Levin-1965} showed that $\mathcal{P}_2(a,q)\ll q^{2.3696}$. Later, Richert pointed out that by using the method in \cite{Jurkat-Richert-1965}, the exponent can be replaced by $\frac{25}{11}+\varepsilon$. Afterwards, Halberstam and Richert gave the result that the exponent can be replaced by $\frac{11}{5}$ in
their monograph \cite{Halberstam-Richert-book}, Chapter 9. Motohashi \cite{Motohashi-1976}, in 1976, gave the exponent $2+\varepsilon$ subject to a certain unproved hypothesis. In 1978, Heath--Brown first gave an unconditional bound, stronger than (\ref{assumption}), for almost--primes $\mathcal{P}_2$. He showed that
$\mathcal{P}_2(a,q)\ll q^{1.965}$. After that, in 1982, Iwaniec \cite{Iwaniec-1982} improved Heath--Brown's result
and derive that $\mathcal{P}_2(a,q)\ll q^{1.845}$.

In this paper, we shall continue to improve the result of Iwaniec \cite{Iwaniec-1982} and establish
the following theorem.

\begin{theorem}\label{Theorem}
  Suppose that $a$ and $q$ are positive integers satisfying $(a,q)=1$. Let $\mathcal{P}_2(a,q)$ be the least almost--prime $\mathcal{P}_2$ which satisfies $\mathcal{P}_2\equiv a\pmod q$. Then for sufficiently large $q$, there holds
\begin{equation*}
   \mathcal{P}_2(a,q)\ll q^{1.8345}.
\end{equation*}
\end{theorem}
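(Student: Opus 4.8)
The plan is to detect an almost--prime $\mathcal{P}_2$ of the shape $\mathcal{P}_2\equiv a\pmod q$ with $\mathcal{P}_2\leqslant x$, where $x=q^{\theta}$ and we aim for $\theta=1.8345$. Following the Iwaniec--type strategy, I would work with the sequence
\begin{equation*}
   \mathcal{A}=\bigl\{\,n\leqslant x:\ n\equiv a\!\!\!\pmod q\,\bigr\},
\end{equation*}
and try to show that $\mathcal{A}$ contains an integer with at most two prime factors by combining a lower--bound linear sieve (the Jurkat--Richert theorem) with Selberg's $\Lambda^2$--sieve used as a device to remove the ``nearly equal'' pairs of primes that the linear sieve cannot by itself exclude. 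Concretely, let $S(\mathcal{A},z)$ count the elements of $\mathcal{A}$ free of prime factors $<z$, and set $z=x^{1/u}$. The linear sieve gives a lower bound for $S(\mathcal{A},x^{1/u})$ in terms of the level of distribution of $\mathcal{A}$; after discarding integers divisible by a prime in a dyadic range, what survives with the wrong (three or more) number of prime factors is controlled by bilinear sums, and here the crucial input is an extended level of distribution — beyond $x^{1/2}/q$ — for $\mathcal{A}$ in progressions, obtained via the dispersion method / Kloosterman--sum techniques (Deshouillers--Iwaniec style) exactly as in \cite{Iwaniec-1982}. The numerology is then: choose $u$ and the decomposition parameters so that the main term of the weighted sum
\begin{equation*}
   S(\mathcal{A},x^{1/u})-\tfrac12\!\!\sum_{x^{1/u}\leqslant p<x^{1/v}}\! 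S(\mathcal{A}_p,p)-(\text{error})
\end{equation*}
stays positive, which forces an inequality between $\theta$, $u$, and $v$ that one optimizes.

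The key steps, in order, would be: (i) set up $\mathcal{A}$ and record the trivial congruence condition $q\mid n-a$, reducing everything to counting in residue classes mod $qd$ for sieve moduli $d$; (ii) establish the Bombieri--Vinogradov--type / large--sieve estimate giving level of distribution $x^{\vartheta}$ with $\vartheta$ as large as possible (this is where $q\leqslant x^{1/\theta}$ enters, and $x^{\vartheta}/q$ must exceed $x^{1/2}$ for the extra twist to help); (iii) apply the Jurkat--Richert linear sieve lower bound with the weight removing one prime factor, i.e. the Greaves/Iwaniec ``weighted sieve of dimension $1$'' producing $\mathcal{P}_2$; (iv) bound the extra subtracted terms — the sum over $p$ of $S(\mathcal{A}_p,p)$ restricted to $p$ near $\sqrt{x}$ — by a Selberg $\Lambda^2$ upper bound sieve, carefully, since these ``middle'' primes are the bottleneck; (v) collect all error terms into the level--of--distribution estimate and verify they are $o(\text{main term})$; (vi) carry out the final optimization of the Buchstab/Iwaniec functional inequality in the variables $(u,v)$ to see that $\theta=1.8345$ is admissible while $\theta$ any smaller than Iwaniec's $1.845$ requires the sharper bilinear estimates of step (ii).

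The main obstacle, and the place where the improvement over $1.845$ must come from, is step (iv) combined with step (ii): squeezing the upper bound for $\sum_{p}S(\mathcal{A}_p,p)$ over primes $p$ in the critical range around $x^{1/2}$. The linear sieve alone gives a bound with the ``$2e^{\gamma}$'' loss that is too lossy here; inserting Selberg's $\Lambda^2$--sieve (whence the second keyword) saves a constant factor, but only if one has enough level of distribution on the bilinear side to afford the longer mollifier — so one is balancing the Kloosterman/dispersion input against the $\Lambda^2$ optimization. I expect the delicate part to be showing that the error terms arising from the $\Lambda^2$ weights, once multiplied out and summed against the (extended) level of distribution for $\mathcal{A}$, remain under control uniformly in $q$; getting that uniformity, together with a slightly better decomposition of the Buchstab iteration than in \cite{Iwaniec-1982}, is exactly what lowers the exponent from $1.845$ to $1.8345$. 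Everything else — the linear sieve, the weighted sieve producing $\mathcal{P}_2$, the routine error--term bookkeeping — is standard and follows \cite{Halberstam-Richert-book} and \cite{Iwaniec-1982}.
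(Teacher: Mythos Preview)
Your outline has the right cast of characters but misidentifies the source of the improvement, and this matters for whether you actually reach $1.8345$ rather than merely $1.845$.

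First, the weighted sieve used in the paper is Richert's logarithmic weight
\[
\mathcal{W}(n)=1-\frac{1}{\lambda}\sum_{\substack{z\leqslant p<y\\ p\mid n}}\Bigl(1-\frac{\log p}{\log y}\Bigr),
\]
not the Chen/Buchstab--type combination $S(\mathcal{A},z)-\tfrac12\sum_p S(\mathcal{A}_p,p)$ you wrote down; and the sifting level is a fixed $z=D^{5/23}$ with $D=MN$, $M=x^{1-3\varepsilon}q^{-1}$, $N=x^{1/2-4\varepsilon}q^{-3/4}$, so one is bounding $S(\mathscr{A}_p,z)$, not $S(\mathscr{A}_p,p)$. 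Second, and more importantly, the bilinear level of distribution is \emph{exactly} Theorem~5 of \cite{Iwaniec-1982}; there is no sharper Kloosterman/dispersion input here, so your step~(ii) is not where the gain lies. The actual improvement comes from a three--way split of the range $z\leqslant p<y$ into $[z,D^{8/23})$, $[D^{8/23},M)$, $[M,y)$. On the first two pieces one applies Iwaniec's linear sieve with bilinear remainder as you describe. On the third piece $[M,y)$ --- which lies above the point where $D/p$ drops below $N$, so the linear sieve with level $D/p$ becomes weak --- the paper does something you did not anticipate: it \emph{drops the primality of $p$}, summing instead over all integers $n\in[M,y)$. After interchanging sums this produces a sieve problem for $\mathscr{A}$ with density $\omega_1(d)=\prod_{p\mid d}(2-1/p)$, which is two--dimensional, and Selberg's $\Lambda^2$--sieve is then applied with level $D_1=N^2$ and $\sigma(s)=s^2/(8e^{2\gamma})$. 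This is the ``two--dimensional sieve of Selberg'' referred to in the abstract, and it is what pushes the exponent below $1.845$; the $\Lambda^2$ weights never interact with the bilinear remainder at all, so your worry about controlling $\Lambda^2$ error terms against the extended level of distribution does not arise. Without this specific device your plan would essentially reconstruct \cite{Iwaniec-1982}.
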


\noindent
\textbf{Remark.} Our improvement comes from using distinct methods to deal with the different parts of the sifting sum with more delicate techniques, combining with linear sieve results of Iwaniec \cite{Iwaniec-1980} with bilinear forms for the remainder term and the two--dimensional sieve of Selberg.

\section{Notation and Preliminaries}

Throughout this paper, we always denote primes by $p$. $\varepsilon$ always denotes an arbitrarily small
positive constant, which may not be the same at different occurrences. As usual, we use
$\varphi(n),\mu(n),\tau(n)$ to denote Euler's function, M\"{o}bius' function, and Dirichlet divisor function, respectively.
Moreover, $\Omega(n)$ denotes the number of prime factors of $n$, counted according to multiplicity.
Let $(m_1,m_2,\dots,m_k)$ and $[m_1,m_2,\dots,m_k]$ be the greatest common divisor and the least common multiple of $m_1,m_2,\dots,m_k$, respectively. Also, $f(x)\ll g(x)$ means that $f(x)=O(g(x))$. $\mathcal{P}_r$ always denotes an almost--prime with at most $r$ prime factors, counted according to multiplicity.

Let $\mathscr{A}$ be a finite sequence of integers, and $\mathscr{P}$ a set of primes. For given $z\geqslant2$, we denote
\begin{equation*}
  P(z)=\prod_{\substack{p<z \\p\in\mathscr{P}}}p.
\end{equation*}
Define the sifting function as
\begin{equation*}
  S(\mathscr{A},\mathscr{P},z)=\big|\{a\in\mathscr{A}:(a,P(z))=1 \}\big|.
\end{equation*}
For $d|P(z)$, define $\mathscr{A}_d=\{a\in\mathscr{A}:a\equiv0\pmod d\}$. Moreover, we assume
that $|\mathscr{A}_d|$ may be written in the form
\begin{equation*}
  \big|\mathscr{A}_d\big|=\frac{\omega(d)}{d}X+r(\mathscr{A},d),
\end{equation*}
where $\omega(d)$ is a multiplicative function satisfying $0<\omega(p)<p$ for $p\in\mathscr{P}$; $X$ is an approximation to
$|\mathscr{A}|$ independent of $d$. In addition, $\omega(d)d^{-1}X$ is regarded as a main term of $|\mathscr{A}_d|$;
$r(\mathscr{A},d)$ is regarded as an error term of $|\mathscr{A}_d|$, which is expected to be small on average over $d$.
Also, we assume that the function $\omega(p)$ is constant on average over $p$ in $\mathscr{P}$, which means that
\begin{equation*}
  \prod_{\substack{z_1\leqslant p<z_2\\ p\in\mathscr{P}}}\bigg(1-\frac{\omega(p)}{p}\bigg)^{-1}
  \leqslant\frac{\log z_2}{\log z_1}\bigg(1+\frac{K}{\log z_1}\bigg)
\end{equation*}
for all $z_2>z_1\geqslant2$, where $K$ is a constant satisfying $K\geqslant1$.

\begin{lemma}\label{f(u)}
Let $F(u)$ and $f(u)$ be continuous functions, which satisfy the following differential--difference equations
\begin{equation*}
  \begin{cases}
    F(u)=\displaystyle\frac{2e^\gamma}{u},\quad f(u)=0,\quad\textrm{for}\quad 1\leqslant u\leqslant2,\\
    (uF(u))'=f(u-1),\quad (uf(u))'=F(u-1), \quad\textrm{for}\quad u\geqslant2.
  \end{cases}
\end{equation*}
Then we have
\begin{align*}
F(u)= & \,\,\frac{2e^{\gamma}}{u}, \qquad \textrm{for}\qquad 0< u\leqslant3,
               \nonumber \\
F(u)= & \,\,\frac{2e^{\gamma}}{u}\bigg(1+\int_2^{u-1}\frac{\log(t-1)}{t}\mathrm{d}t\bigg),
             \qquad \textrm{for}\qquad 3\leqslant u\leqslant5,
               \nonumber \\
 f(u)= & \,\, \frac{2e^\gamma}{u}\bigg(\log(u-1)+\int_3^{u-1}\frac{\mathrm{d}t_1}{t_1}
 \int_{2}^{t_1-1}\frac{\log(t_2-1)}{t_2}\mathrm{d}t_2\bigg), \qquad \textrm{for \,\,\,$4\leqslant u\leqslant 6$}.
\end{align*}
\end{lemma}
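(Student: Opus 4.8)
The plan is to integrate the differential--difference equations one unit interval at a time, using the initial data on $[1,2]$ together with the continuity of $F$ and $f$ to pin down each constant of integration.

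First I would establish $F(u)=2e^{\gamma}/u$ on $[2,3]$: for $u$ in this range $u-1\in[1,2]$, so $f(u-1)=0$, hence $(uF(u))'=0$ and $uF(u)$ is constant on $[2,3]$; evaluating at $u=2$ gives $uF(u)=2e^{\gamma}$. Combined with the prescribed value on $[1,2]$ (and the customary extension of the same formula to $(0,1]$), this yields the first identity on $(0,3]$.

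Next I would compute $f$ on $[2,4]$: there $u-1\in[1,3]$, on which $F(u-1)=2e^{\gamma}/(u-1)$ by the previous step, so $(uf(u))'=2e^{\gamma}/(u-1)$; integrating from $2$ and using $f(2)=0$ (which is consistent from both sides by continuity) gives $uf(u)=2e^{\gamma}\log(u-1)$, i.e.\ $f(u)=(2e^{\gamma}/u)\log(u-1)$ throughout $[2,4]$. Substituting this into $(uF(u))'=f(u-1)$ for $u\in[3,5]$, integrating from $3$, using $3F(3)=2e^{\gamma}$, and making the change of variable $t\mapsto t-1$ in the resulting integral produces the second identity $F(u)=(2e^{\gamma}/u)\bigl(1+\int_{2}^{u-1}\log(t-1)/t\,\mathrm{d}t\bigr)$.

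Finally I would insert this formula for $F$ into $(uf(u))'=F(u-1)$ on $[4,6]$, integrate from $4$, and use $4f(4)=2e^{\gamma}\log 3$ to cancel the boundary terms; a change of variable $t_{1}\mapsto t_{1}-1$ in the iterated integral then gives exactly the third identity. No genuine difficulty arises here; the only points demanding care are checking that the pieces agree at the nodes $u=2,3,4$ so that the integration constants are correctly fixed, and keeping track of the ranges of the shifted argument $u-1$ so that the correct closed form of $F$ or $f$ is inserted on each subinterval.
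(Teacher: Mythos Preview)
Your argument is correct: iterating the delay equations one unit at a time, fixing each integration constant by continuity at the nodes $u=2,3,4$, and making the shift $t\mapsto t-1$ in the resulting integrals yields exactly the stated closed forms. The paper itself does not give a proof but simply cites (2.8) of Chapter~8 in Halberstam--Richert and pp.~126--127 of Pan--Pan, where precisely this step-by-step integration is carried out; so your proposal supplies the self-contained details that the paper defers to the literature. One very minor remark: the claim that $F(u)=2e^{\gamma}/u$ on the full interval $(0,1]$ is really a convention (the defining system only speaks for $u\geqslant 1$), but this is standard and harmless for the use made of the lemma.
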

\begin{proof}
 See (2.8) of Chapter 8 in Halberstam and Richert \cite{Halberstam-Richert-book}, and pp. 126--127 of Pan and Pan \cite{Pan-Pan-book}.  $\hfill$
\end{proof}

\section{Proof of Theorem \ref{Theorem}}

Let $\mathscr{A}=\{n: n\leqslant x, n\equiv a\pmod q\}$, where $(a,q)=1,\,x^{1/2}<q\leqslant x^{3/5}$. Set
\begin{equation*}
\mathscr{P}=\{p:\,p \nmid q\},\qquad M=x^{1-3\varepsilon}q^{-1},\qquad N=x^{\frac{1}{2}-4\varepsilon}q^{-\frac{3}{4}},\qquad D=MN.
\end{equation*}
Moreover, we put
\begin{equation*}
 \delta=0.86,\qquad \theta=1.8345, \qquad x=q^\theta, \qquad y=q^\delta.
\end{equation*}
We write $S(\mathscr{A},z)$ as abbreviation of $S(\mathscr{A},\mathscr{P},z)$ for convenience. For the notations defined as above, we have $M<y<D$ and consider the weighted sum with Richert's weights of logarithmic type
\begin{equation*}
W(\mathscr{A};z,y)=\sum_{\substack{n\in\mathscr{A}\\ (n,P(z))=1}}
\Bigg(1-\frac{1}{\lambda}\sum_{\substack{z\leqslant p<y\\ p|n}}\bigg(1-\frac{\log p}{\log y}\bigg)\Bigg),
\end{equation*}
where $z=D^{5/23},\,\lambda=3-\displaystyle\frac{\log x}{\log y}-\varepsilon$. For convenience, we write
\begin{equation*}
  \mathcal{W}(n)=1-\frac{1}{\lambda}\sum_{\substack{z\leqslant p<y\\ p|n}}\bigg(1-\frac{\log p}{\log y}\bigg).
\end{equation*}
Then we have
\begin{equation}\label{W-fj}
  W(\mathscr{A};z,y)=\sum_{\substack{n\in\mathscr{A}\\ (n,P(z))=1\\ \Omega(n)\leqslant2}} \mathcal{W}(n)
                    +\sum_{\substack{n\in\mathscr{A}\\ (n,P(z))=1\\ \Omega(n)\geqslant3\\ \mu(n)\not=0}} \mathcal{W}(n)
                    +\sum_{\substack{n\in\mathscr{A}\\ (n,P(z))=1\\ \Omega(n)\geqslant3\\ \mu(n)=0}} \mathcal{W}(n).
\end{equation}
Obviously, we have
\begin{align}
            \sum_{\substack{n\in\mathscr{A}\\ (n,P(z))=1\\ \Omega(n)\geqslant3\\ \mu(n)=0}} \mathcal{W}(n)
 \ll & \,\, \sum_{\substack{n\in\mathscr{A}\\ (n,P(z))=1\\ \mu(n)=0}}\tau(n)
            \ll x^\varepsilon\sum_{\substack{z\leqslant p\leqslant x^{1/2}\\ p\nmid q}}
            \sum_{\substack{k\leqslant(x-a_1)/q\\ k\equiv -a_1\overline{q} \!\!\!\!\!\pmod{p^2}}} 1
                  \nonumber \\
 \ll & \,\,  x^\varepsilon\sum_{z\leqslant p\leqslant x^{1/2}}\bigg(\frac{x}{p^2q}+1\bigg)
            \ll x^\varepsilon\bigg(\frac{x}{qz}+x^{1/2}\bigg)=o\bigg(\frac{x^{1-\varepsilon}}{\varphi(q)}\bigg),
\end{align}
where the integer $a_1$ satisfies $1\leqslant a_1\leqslant q$ and $a_1\equiv a\!\!\pmod q$, while the integer $\overline{q}$ satisfies $q\overline{q}\equiv1\!\!\pmod{p^2}$ with respect to the prime $p$ satisfying
$z\leqslant p\leqslant x^{1/2}$ and $p\nmid q$.

For a given integer $n$ with $n\leqslant x,\, n\equiv a\!\! \pmod q,\, (a,q)=1,\, (n,P(z))=1$ and $\mu(n)\not=0$, the weight
$\mathcal{W}(n)$ in the sum $W(\mathscr{A};z,y)$ satisfies
\begin{align}\label{weight-upper}
 1-\frac{1}{\lambda}\sum_{\substack{z\leqslant p<y\\ p|n}}\bigg(1-\frac{\log p}{\log y}\bigg)
 \leqslant & \,\, \frac{1}{\lambda}\Bigg(\lambda-\sum_{p|n}\bigg(1-\frac{\log p}{\log y}\bigg)\Bigg)
                     \nonumber  \\
 = & \,\, \frac{1}{\lambda}\bigg(3-\frac{\log x}{\log y}-\varepsilon-\Omega(n)+\frac{\log n}{\log y}\bigg)
          <\frac{1}{\lambda}\big(3-\Omega(n)\big),
\end{align}
and thus $\mathcal{W}(n)<0$ for $\Omega(n)\geqslant3$. From (\ref{W-fj})--(\ref{weight-upper}), we know that
\begin{align}\label{aim-lower}
          \sum_{\substack{n\in\mathscr{A}\\ (n,P(z))=1\\ \Omega(n)\leqslant 2}}\mathcal{W}(n)
 = & \,\, W(\mathscr{A};z,y)-\sum_{\substack{n\in\mathscr{A}\\ (n,P(z))=1\\ \Omega(n)\geqslant 3\\ \mu(n)\not=0}}\mathcal{W}(n)
          +o\bigg(\frac{x^{1-\varepsilon}}{\varphi(q)}\bigg)
                 \nonumber \\
 \geqslant & \,\, W(\mathscr{A};z,y)+o\bigg(\frac{x^{1-\varepsilon}}{\varphi(q)}\bigg).
\end{align}
For $W(\mathscr{A};z,y)$, we have
\begin{align}\label{W(A;z,y)-expan}
           W(\mathscr{A};z,y)
 = & \,\, \sum_{\substack{n\in\mathscr{A}\\ (n,P(z))=1}}1-\frac{1}{\lambda}
          \sum_{\substack{z\leqslant p<y\\ p\nmid q}}\bigg(1-\frac{\log p}{\log y}\bigg)
          \sum_{\substack{n\in\mathscr{A}\\ n\equiv0 \!\!\!\!\!\pmod p\\ (n,P(z))=1}}1
                    \nonumber \\
 = & \,\, S(\mathscr{A},z)-\frac{1}{\lambda}
          \sum_{\substack{z\leqslant p<y\\ p\nmid q}}\bigg(1-\frac{\log p}{\log y}\bigg)S(\mathscr{A}_p,z).
\end{align}
We appeal to Theorem 1 of Iwaniec \cite{Iwaniec-1980} for linear sieve results with bilinear forms for the remainder
term, which simply gives
\begin{equation}\label{S(A,z)-lower-1}
S(\mathscr{A},z)\geqslant \frac{x}{\varphi(q)}V(z)\bigg(f\bigg(\frac{23}{5}\bigg)+O\big((\log D)^{-1/3}\big)\bigg)-R^-,
\end{equation}
where
\begin{equation*}
R^-=\sum_{\ell<\exp(8\varepsilon^{-3})}\mathop{\sum_{m<M}\sum_{n<N}}_{(mn,q)=1}
        a_m^-(\ell)b_n^-(\ell)r(\mathscr{A},mn),
\end{equation*}
and
\begin{equation}\label{V(z)-Mertens-asy}
V(z)=\prod_{p<z}\bigg(1-\frac{1}{p}\bigg)=\frac{e^{-\gamma}}{\log z}\bigg(1+O\bigg(\frac{1}{\log z}\bigg)\bigg)
\end{equation}
by the Mertens' prime number theorem (See \cite{Mertens-1874}). By Theorem 5 of Iwaniec \cite{Iwaniec-1982},
one has
\begin{equation}\label{S(A,z)-error-upper}
R^-\ll\frac{x^{1-\varepsilon}}{\varphi(q)}.
\end{equation}
From Lemma \ref{f(u)}, (\ref{S(A,z)-lower-1}), (\ref{V(z)-Mertens-asy}) and (\ref{S(A,z)-error-upper}), we obtain
\begin{equation}\label{S(A,z)-lower}
S(\mathscr{A},z)\geqslant\frac{x}{\varphi(q)\log D}
\bigg\{2\bigg(\log \frac{18}{5}
+\int_3^\frac{18}{5}\frac{\mathrm{d}t_1}{t_1}\int_2^{t_1-1}\frac{\log(t_2-1)}{t_2}\mathrm{d}t_2\bigg)\bigg\}(1+O(\varepsilon)).
\end{equation}
For the second term in (\ref{W(A;z,y)-expan}), we write it into three parts
\begin{align}\label{W-second-sum-divided}
    & \,\, \sum_{\substack{z\leqslant p<y\\ p\nmid q}}\bigg(1-\frac{\log p}{\log y}\bigg)S(\mathscr{A}_p,z)
                    \nonumber \\
= & \,\, \sum_{\substack{z\leqslant p<D^{8/23}\\ p\nmid q}}\bigg(1-\frac{\log p}{\log y}\bigg)S(\mathscr{A}_p,z)
         +\sum_{\substack{D^{8/23}\leqslant p<M\\ p\nmid q}}\bigg(1-\frac{\log p}{\log y}\bigg)S(\mathscr{A}_p,z)
                     \nonumber \\
  & \,\,  +\sum_{\substack{M\leqslant p<y\\ p\nmid q}}\bigg(1-\frac{\log p}{\log y}\bigg)S(\mathscr{A}_p,z).
\end{align}
Henceforth, we shall use two distinct methods to deal with the sums in (\ref{W-second-sum-divided}). For the first and the second sum in (\ref{W-second-sum-divided}), we shall appeal to linear sieve results of Iwaniec with bilinear forms for the remainder term. On the other hand, we will treat the third sum by the two--dimensional sieve of Selberg.

Now, we deal with the first sum in (\ref{W-second-sum-divided}). For each $S(\mathscr{A}_p,z)$, by Theorem 1 of Iwaniec \cite{Iwaniec-1980}, we derive that 
\begin{align*}
                  S(\mathscr{A}_p,z)
 \leqslant & \,\, \frac{x}{p\varphi(q)}V(z)\Bigg(F\bigg(\frac{\log(D/p)}{\log z}\bigg)+O\big(\log^{-1/3}D\big)\Bigg)
                      \nonumber \\
           & \,\,  +\sum_{\ell<\exp(8\varepsilon^{-3})}\mathop{\sum_{m<M/p}\sum_{n<N}}_{(mn,q)=1}
                   a_m^+(\ell)b_n^+(\ell)r(\mathscr{A},pmn)
                       \nonumber \\
      = & \,\, \frac{x(2+O(\varepsilon))}{p\varphi(q)\log(D/p)}\Bigg(1+\int_2^{\frac{\log(D/p)}{\log z}-1}
               \frac{\log(t-1)}{t}\mathrm{d}t\Bigg)
                        \nonumber \\
        & \,\, +\sum_{\ell<\exp(8\varepsilon^{-3})}\mathop{\sum_{m<M/p}\sum_{n<N}}_{(mn,q)=1}
                   a_m^+(\ell)b_n^+(\ell)r(\mathscr{A},pmn),
\end{align*}
where $|a_m^+(\ell)|\leqslant1,\,|b_n^+(\ell)|\leqslant1$. Summing over $p\in[z,D^{8/23}),p\nmid q$ with an interpretation
that $pm$ as one variable of the summation while $n$ as the other, then, according to Theorem 5 of Iwaniec
\cite{Iwaniec-1982}, the final remainder term arising is $\ll x^{1-\varepsilon}/\varphi(q)$. Therefore, we deduce that
\begin{align}\label{sum_3-1}
   &\,\, \sum_{\substack{z\leqslant p<D^{8/23}\\ p\nmid q}}\bigg(1-\frac{\log p}{\log y}\bigg)S(\mathscr{A}_p,z)
                       \nonumber \\
\leqslant &\,\, \sum_{z\leqslant p<D^{8/23}}\frac{\log(y/p)}{\log y}\cdot\frac{x(2+O(\varepsilon))}{p\varphi(q)\log(D/p)}
                       \nonumber \\
          &\,\, \qquad \times\Bigg(1+\int_2^{\frac{\log(D/p)}{\log z}-1}\frac{\log(t-1)}{t}\mathrm{d}t\Bigg)
                +O\bigg(\frac{x^{1-\varepsilon}}{\varphi(q)}\bigg)
                       \nonumber \\
  = &\,\, \frac{x(2+O(\varepsilon))}{\varphi(q)\log D}\cdot\frac{\log D}{\log y}\sum_{z\leqslant p<D^{8/23}}
           \frac{\log(y/p)}{p\log(D/p)}
                       \nonumber \\
    &\,\, \qquad\times \Bigg(1+\int_2^{\frac{\log(D/p)}{\log z}-1}\frac{\log(t-1)}{t}\mathrm{d}t\Bigg)
           +O\bigg(\frac{x^{1-\varepsilon}}{\varphi(q)}\bigg).
\end{align}
By partial summation and by prime number theorem, it is easy to derive that
\begin{align}\label{sum_3-coeff}
   & \,\, \frac{\log D}{\log y}\sum_{z\leqslant p<D^{8/23}}\frac{\log(y/p)}{p\log(D/p)}
          \Bigg(1+\int_2^{\frac{\log(D/p)}{\log z}-1}\frac{\log(t-1)}{t}\mathrm{d}t\Bigg)
                 \nonumber \\
 = & \,\, \frac{6\theta-7}{4\delta}\int_{\frac{30\theta-35}{92}}^{\frac{12\theta-14}{23}}
          \frac{\delta-\beta}{\beta(\frac{3\theta}{2}-\frac{7}{4}-\beta)}
          \Bigg(1+\int_2^{\frac{108\theta-126-92\beta}{30\theta-35}}
          \frac{\log(t-1)}{t}\mathrm{d}t\Bigg)\mathrm{d}\beta+O(\varepsilon).
\end{align}
Next, we shall deal with the second sum in (\ref{W-second-sum-divided}), which is similar to the first sum. For each 
$S(\mathscr{A}_p,z)$, by Theorem 1 of Iwaniec \cite{Iwaniec-1980}, we get
\begin{align*}
                  S(\mathscr{A}_p,z)
 \leqslant & \,\, \frac{x}{p\varphi(q)}V(z)\Bigg(F\bigg(\frac{\log(D/p)}{\log z}\bigg)+O\big(\log^{-1/3}D\big)\Bigg)
                      \nonumber \\
           & \,\,  +\sum_{\ell<\exp(8\varepsilon^{-3})}\mathop{\sum_{m<M/p}\sum_{n<N}}_{(mn,q)=1}
                   a_m^+(\ell)b_n^+(\ell)r(\mathscr{A},pmn)
                       \nonumber \\
      = & \,\, \frac{x(2+O(\varepsilon))}{p\varphi(q)\log(D/p)}
               +\sum_{\ell<\exp(8\varepsilon^{-3})}\mathop{\sum_{m<M/p}\sum_{n<N}}_{(mn,q)=1}
                   a_m^+(\ell)b_n^+(\ell)r(\mathscr{A},pmn),
\end{align*}
where $|a_m^+(\ell)|\leqslant1,\,|b_n^+(\ell)|\leqslant1$. Summing over $p\in[D^{8/23},M),p\nmid q$ with an interpretation
that $pm$ as one variable of the summation while $n$ as the other, then, according to Theorem 5 of Iwaniec
\cite{Iwaniec-1982}, the final remainder term arising is $\ll x^{1-\varepsilon}/\varphi(q)$. Hence one get
\begin{align}\label{sum_1-1}
   &\,\, \sum_{\substack{D^{8/23}\leqslant p<M\\ p\nmid q}}\bigg(1-\frac{\log p}{\log y}\bigg)S(\mathscr{A}_p,z)
                       \nonumber \\
\leqslant &\,\, \sum_{D^{8/23}\leqslant p<M}\frac{\log(y/p)}{\log y}\cdot\frac{x(2+O(\varepsilon))}{p\varphi(q)\log(D/p)}
                +O\bigg(\frac{x^{1-\varepsilon}}{\varphi(q)}\bigg)
                       \nonumber \\
  = &\,\, \frac{x(2+O(\varepsilon))}{\varphi(q)\log D}\cdot\frac{\log D}{\log y}\sum_{D^{8/23}\leqslant p<M}
           \frac{\log(y/p)}{p\log(D/p)}+O\bigg(\frac{x^{1-\varepsilon}}{\varphi(q)}\bigg).
\end{align}
By partial summation and by prime number theorem, one gets
\begin{equation}\label{sum_1-coeff}
\frac{\log D}{\log y}\sum_{D^{8/23}\leqslant p<M}\frac{\log(y/p)}{p\log(D/p)}
=\frac{6\theta-7}{4\delta}
\int_{\frac{12\theta-14}{23}}^{\theta-1}
\frac{\delta-\beta}{\beta(\frac{3\theta}{2}-\frac{7}{4}-\beta)}\mathrm{d}\beta+O(\varepsilon).
\end{equation}
Finally, we shall deal with the third sum, which appears in (\ref{W-second-sum-divided}), in a different manner without appealing to Theorem 1 of Iwaniec \cite{Iwaniec-1980}. We begin with ignoring the fact that $p$ is a prime
and obtaining
\begin{align}\label{sum_2-upper-1}
     \sum_{\substack{M\leqslant p<y\\ p\nmid q}}\bigg(1-\frac{\log p}{\log y}\bigg)S(\mathscr{A}_p,z)
 = & \,\, \sum_{\substack{M\leqslant p<y\\ (p,q)=1}}\bigg(1-\frac{\log p}{\log y}\bigg)
          \sum_{\substack{m\in\mathscr{A}\\ m\equiv0\!\!\!\!\!\pmod p\\ (m,P(z))=1}}1
                       \nonumber \\
 \leqslant & \,\, \sum_{M\leqslant n< y}\bigg(1-\frac{\log n}{\log y}\bigg)
                  \sum_{\substack{m\in\mathscr{A}\\ m\equiv0\!\!\!\!\!\pmod n \\ (m,P(z))=1}}1,
\end{align}
where $n$ runs over all integers in the interval $[M,y)$. Let $\{\lambda^+(d)\}$ be an upper bound sieve of level $D_1$, i.e. a sequence of real numbers satisfying
\begin{equation*}
|\lambda^+(d)|\leqslant1,\qquad \lambda^+(d)=0\qquad \textrm{for}\quad d\geqslant D_1 \quad
 \textrm{or}\quad\mu(d)=0,
\end{equation*}
and
\begin{equation*}
\sum_{d|n}\mu(d)\leqslant\sum_{d|n}\lambda^+(d).
\end{equation*}
Then we get
\begin{align}\label{sum_2-upper-2}
   & \,\, \sum_{M\leqslant n< y}\bigg(1-\frac{\log n}{\log y}\bigg)
          \sum_{\substack{m\in\mathscr{A}\\ m\equiv0\!\!\!\!\!\pmod n\\ (m,P(z))=1}}1
                    \nonumber \\
   = & \,\,\sum_{M\leqslant n< y}\bigg(1-\frac{\log n}{\log y}\bigg)
           \sum_{\substack{m\in\mathscr{A}\\ m\equiv0\!\!\!\!\!\pmod n}}\sum_{d\mid (m,P(z))}\mu(d)
                       \nonumber \\
\leqslant & \,\, \sum_{M\leqslant n< y}\bigg(1-\frac{\log n}{\log y}\bigg)
                  \sum_{\substack{m\in\mathscr{A}\\ m\equiv0\!\!\!\!\!\pmod n}}\sum_{d\mid (m,P(z))}\lambda^+(d)
                       \nonumber \\
   = & \,\, \sum_{\substack{d<D_1\\ d|P(z)}}\lambda^+(d)
            \sum_{M\leqslant n< y}\bigg(1-\frac{\log n}{\log y}\bigg)
            \sum_{\substack{m\in\mathscr{A}\\ m\equiv0\!\!\!\!\!\pmod {[d,n]}}}1
                      \nonumber \\
= & \,\, \sum_{\substack{d<D_1\\ d|P(z)}}\lambda^+(d)
                 \sum_{M\leqslant n< y}\bigg(1-\frac{\log n}{\log y}\bigg)
                 \frac{x(1+O(\varepsilon))}{[d,n]\varphi(q)}
                       \nonumber \\
= & \,\, \frac{x(1+O(\varepsilon))}{\varphi(q)}\sum_{\substack{d<D_1\\ d|P(z)}}\frac{\lambda^+(d)}{d}
                 \sum_{M\leqslant n< y}\bigg(1-\frac{\log n}{\log y}\bigg)\frac{(d,n)}{n}.
\end{align}
For the inner sum in (\ref{sum_2-upper-2}), by partial summation, we have
\begin{align}\label{sum_2-upper-3}
  & \,\, \sum_{M\leqslant n< y}\bigg(1-\frac{\log n}{\log y}\bigg)\frac{(d,n)}{n}
                         \nonumber \\
= & \,\, \sum_{v|d}\sum_{\substack{\frac{M}{v}\leqslant n_1<\frac{y}{v}\\ (n_1,\frac{d}{v})=1}}
         \bigg(1-\frac{\log(n_1v)}{\log y}\bigg)\frac{1}{n_1}
                         \nonumber \\
= & \,\,\sum_{v|d}\sum_{\frac{M}{v}\leqslant n_1<\frac{y}{v}}\bigg(1-\frac{\log(n_1v)}{\log y}\bigg)\frac{1}{n_1}
        \sum_{\alpha\mid(n_1,\frac{d}{v})}\mu(\alpha)
                          \nonumber \\
= & \,\, \sum_{v|d}\sum_{\alpha\mid\frac{d}{v}}\frac{\mu(\alpha)}{\alpha}
         \sum_{\frac{M}{v\alpha}\leqslant n_2<\frac{y}{v\alpha}}
         \bigg(1-\frac{\log(n_2 v\alpha)}{\log y}\bigg)\frac{1}{n_2}
                          \nonumber \\
= & \,\, \sum_{v|d}\sum_{\alpha\mid\frac{d}{v}}\frac{\mu(\alpha)}{\alpha}
         \int_{\frac{M}{v\alpha}}^{\frac{y}{v\alpha}}\bigg(1-\frac{\log(tv\alpha)}{\log y}\bigg)
         \frac{\mathrm{d}t}{t}+O\Bigg(\frac{1}{M}\sum_{v|d}v\sum_{\alpha\mid\frac{d}{v}}1\Bigg)
                           \nonumber \\
= & \,\, \sum_{v|d}\sum_{\alpha\mid\frac{d}{v}}\frac{\mu(\alpha)}{\alpha}\int_M^y
         \bigg(1-\frac{\log t}{\log y}\bigg)\frac{\mathrm{d}t}{t}
         +O\Bigg(\frac{1}{M}\sum_{v|d}v\tau\bigg(\frac{d}{v}\bigg)\Bigg).
\end{align}
For the integral in (\ref{sum_2-upper-3}), it is easy to see that
\begin{equation}\label{sum_2-upper-3-integral}
  \int_M^y\bigg(1-\frac{\log t}{\log y}\bigg)\frac{\mathrm{d}t}{t}
  =\frac{1}{2\log y}\bigg(\log\frac{y}{M}\bigg)^2.
\end{equation}
In addition, we have
\begin{align}\label{sum_2-upper-3-sum}
  \omega_1(d):=\sum_{v|d}\sum_{\alpha|\frac{d}{v}}\frac{\mu(\alpha)}{\alpha}
              =\sum_{\alpha|d}\frac{\mu(\alpha)}{\alpha}\sum_{v\mid\frac{d}{\alpha}}1
              =\sum_{\alpha|d}\frac{\mu(\alpha)}{\alpha}\tau\bigg(\frac{d}{\alpha}\bigg)
              =\prod_{p|d}\bigg(2-\frac{1}{p}\bigg).
\end{align}
Combining (\ref{sum_2-upper-1})--(\ref{sum_2-upper-3-sum}), we derive that
\begin{align}\label{sum_2-upper-twice}
 & \,\,\sum_{\substack{M\leqslant p<y\\ p\nmid q}}\bigg(1-\frac{\log p}{\log y}\bigg)S(\mathscr{A}_p,z)
                      \nonumber \\
 \leqslant & \,\, \frac{x(1+O(\varepsilon))}{\varphi(q)}\sum_{\substack{d<D_1\\ d|P(z)}}\frac{\lambda^+(d)}{d}
                  \Bigg(\frac{\omega_1(d)}{2\log y}\bigg(\log\frac{y}{M}\bigg)^2
                  +O\bigg(\frac{1}{M}\sum_{v|d}v\tau\bigg(\frac{d}{v}\bigg)\bigg)\Bigg)
                        \nonumber \\
 = & \,\, \frac{x(1+O(\varepsilon))}{2\varphi(q)\log y}\bigg(\log\frac{y}{M}\bigg)^2
          \sum_{\substack{d<D_1\\ d|P(z)}}\frac{\omega_1(d)}{d}\lambda^+(d)
          +O\Bigg(\frac{x}{\varphi(q)M}
             \sum_{\substack{d<D_1\\ d|P(z)}}|\lambda^+(d)|\sum_{v|d}\frac{\tau(d/v)}{d/v}\Bigg)
                       \nonumber \\
 = & \,\, \frac{x(1+O(\varepsilon))}{2\varphi(q)\log y}\bigg(\log\frac{y}{M}\bigg)^2
          \sum_{\substack{d<D_1\\ d|P(z)}}\frac{\omega_1(d)}{d}\lambda^+(d)
          +O\Bigg(\frac{x}{\varphi(q)M}\sum_{\substack{d<D_1\\ d|P(z)}}\sum_{v|d}\frac{\tau(v)}{v}\Bigg).
\end{align}
By noting that the function $\omega_1(d)$ is multiplicative and it satisfies the $2$--dimensional sieve assumptions, we specify $\lambda^+(d)$'s to be that from Selberg's $\Lambda^2$--sieve and deduce that (for instance, one can see p.197 of \cite{Halberstam-Richert-book})
\begin{equation}\label{Selberg-Lambda^2-sieve}
\sum_{\substack{d<D_1\\ d|P(z)}}\frac{\omega_1(d)}{d}\lambda^+(d)=\frac{1}{G(D_1,z)}
=\frac{\mathscr{V}(z)}{\sigma(s)}\bigg(1+O\bigg(\frac{1}{\log z}\bigg)\bigg)
\end{equation}
holds for $z\leqslant D_1$, where
\begin{equation}\label{Selberg-Lambda^2-condition}
 s=\frac{\log D_1}{\log z},\quad \mathscr{V}(z)=\prod_{p<z}\bigg(1-\frac{\omega_1(p)}{p}\bigg),\quad
 \sigma(s)=\frac{s^2}{8e^{2\gamma}}\quad \textrm{for}\quad 0<s\leqslant2.
\end{equation}
By (\ref{sum_2-upper-3-sum}) and Mertens' prime number theorem (See \cite{Mertens-1874}), we obtain
\begin{equation}\label{Selberg-Lambda^2-V(z)-explicite}
 \mathscr{V}(z)=\prod_{p<z}\bigg(1-\frac{1}{p}\bigg)^2=\frac{e^{-2\gamma}}{\log^2z}
 \bigg(1+O\bigg(\frac{1}{\log z}\bigg)\bigg).
\end{equation}
Taking $D_1=N^2$, then $z\leqslant D_1$, and thus (\ref{Selberg-Lambda^2-sieve}) holds. Moreover, the remainder term in (\ref{sum_2-upper-twice}) is
\begin{align}\label{error-contribution}
  \ll &\,\,  \frac{x}{\varphi(q)M}\sum_{d<D_1}\prod_{p|d}\bigg(1+\frac{2}{p}\bigg)
            \ll \frac{x}{\varphi(q)M}\sum_{d<D_1}(\log\log d)^2
                          \nonumber \\
 \ll &\,\,    \frac{x}{\varphi(q)M}D_1(\log\log D_1)^2=o\bigg(\frac{x}{\varphi(q)\log y}\bigg).
\end{align}
From (\ref{sum_2-upper-twice})--(\ref{error-contribution}), we deduce that
\begin{align}\label{second-sum-last}
  \sum_{\substack{M\leqslant p<y\\ p\nmid q}}\bigg(1-\frac{\log p}{\log y}\bigg)S(\mathscr{A}_p,z)
      \leqslant & \,\,\frac{x(1+O(\varepsilon))}{\varphi(q)\log y}\bigg(\frac{\log(y/M)}{\log N}\bigg)^2
                       \nonumber \\
  = & \,\, \frac{x(1+O(\varepsilon))}{\varphi(q)\log D}\cdot\frac{\log D}{\log y}
             \bigg(\frac{\log(y/M)}{\log N}\bigg)^2
                       \nonumber \\
   = & \,\, \frac{x(1+O(\varepsilon))}{\varphi(q)\log D}
           \Bigg\{\frac{6\theta-7}{\delta}\bigg(\frac{2(\delta-\theta+1)}{2\theta-3}\bigg)^2\Bigg\}.
\end{align}
Finally, combining (\ref{aim-lower}), (\ref{W(A;z,y)-expan}), (\ref{S(A,z)-lower}), (\ref{W-second-sum-divided}), (\ref{sum_3-1}), (\ref{sum_3-coeff}), (\ref{sum_1-1}), (\ref{sum_1-coeff}) and (\ref{second-sum-last}), we conclude that
\begin{align*}
    & \,\, \sum_{\substack{n\in\mathscr{A}\\ (n,P(z))=1\\ \Omega(n)\leqslant 2}}\mathcal{W}(n)    
           \geqslant W(\mathscr{A};z,y)+o\bigg(\frac{x^{1-\varepsilon}}{\varphi(q)}\bigg)
                        \nonumber \\
 \geqslant &\,\, \frac{x(1+O(\varepsilon))}{\varphi(q)\log D}\Bigg\{2\bigg(\log\frac{18}{5}+
                 \int_3^\frac{18}{5}\frac{\mathrm{d}t_1}{t_1}\int_2^{t_1-1}\frac{\log(t_2-1)}{t_2}\mathrm{d}t_2\bigg)
                        \nonumber \\
         &\,\,-\frac{6\theta-7}{2(3\delta-\theta)}\int_{\frac{30\theta-35}{92}}^{\frac{12\theta-14}{23}}
               \frac{\delta-\beta}{\beta(\frac{3\theta}{2}-\frac{7}{4}-\beta)}
               \Bigg(1+\int_2^{\frac{108\theta-126-92\beta}{30\theta-35}}
               \frac{\log(t-1)}{t}\mathrm{d}t\Bigg)\mathrm{d}\beta
                     \nonumber \\
         &\,\, -\frac{6\theta-7}{2(3\delta-\theta)}\int_{\frac{12\theta-14}{23}}^{\theta-1}
                \frac{\delta-\beta}{\beta(\frac{3\theta}{2}-\frac{7}{4}-\beta)}\mathrm{d}\beta      
                      -\frac{6\theta-7}{3\delta-\theta}
                   \bigg(\frac{2(\delta-\theta+1)}{2\theta-3}\bigg)^2 \Bigg\}.
\end{align*}
By recalling the parameter $\delta=0.86$ and $\theta=1.8345$, then by a simple numerical calculation, we know that the number in the above brackets $\{\,\}$ is $>0.0004282583$. This completes the proof of Theorem \ref{Theorem}.

\section*{Acknowledgement}


The authors would like to express the most sincere gratitude to the referee
for his/her patience in refereeing this paper. This work is supported by the National Natural Science Foundation of China
(Grant No. 11901566, 12001047, 11771333, 11971476, 12071238), the Fundamental Research Funds for the Central Universities
(Grant No. 2021YQLX02), the National Training Program of Innovation and Entrepreneurship for Undergraduates
(Grant No. 202107010), the Undergraduate Education and Teaching Reform and Research Project for China University of Mining 
and Technology (Beijing) (Grant No. J210703), and the Scientific Research Funds of Beijing Information Science and Technology University (Grant No. 2025035).

\end{document}